\documentclass{amsart}
\usepackage[english]{babel}
\usepackage[latin1]{inputenc}

\usepackage{newlfont}
\usepackage{amsmath}
\usepackage{graphicx}
\usepackage{amssymb}
\usepackage{amsmath}
\usepackage{latexsym}
\usepackage{amsthm}
\usepackage{stmaryrd}
\usepackage{braket}
\usepackage{enumerate}
\usepackage{booktabs}
\usepackage{mathtools}
\usepackage{array}
\usepackage{tikz}
\usepackage{tikz-cd}
\usepackage{paralist}
\usepackage{color}
\usepackage{cleveref}

\usepackage[a4paper,top=3cm,bottom=3cm,left=3.5cm,right=3.5cm, bindingoffset=5mm]{geometry}


\newtheorem{Theorem}{Theorem}

\newtheorem{Lemma}[Theorem]{Lemma}
\newtheorem{Cor}[Theorem]{Corollary}
\newtheorem{Prop}[Theorem]{Proposition}
\theoremstyle{definition}

\newtheorem{Rem}[Theorem]{Remark}


\DeclareMathOperator{\Spec}{Spec}

\DeclareMathOperator{\Cl}{Cl}
\DeclareMathOperator{\Pic}{Pic}
\DeclareMathOperator{\NS}{NS}





\title{Isolated singularities with large class group}
\author{Alessio Caminata}

\address{{\small Alessio Caminata, Dipartimento di Matematica, Universit\`a di Genova\\ via Dodecaneso 35, 16146, Genova, Italy}}
\email{caminata@dima.unige.it}


\begin{document}

\thanks{\textit{Mathematics Subject Classification (2020)}: 13B35, 13C20, 14H40.   
\\ \indent \textit{Keywords and phrases:} divisor class group, Picard group, symmetric product of curves.}

\begin{abstract}
Let $d\geq3$ and $g\geq1$ be integers.
Using a geometric construction involving the symmetric product of a projective curve, we exhibit a $d$-dimensional complete local normal domain over $\mathbb{C}$ with an isolated singularity such that its class group contains a copy of $(\mathbb{R}/\mathbb{Z})^{\oplus 2g}$ as subgroup.
\end{abstract}

\maketitle


We work over the field of complex numbers $\mathbb{C}$. Let $\mathcal{C}$ be a smooth projective irreducible curve of genus $g$, and let $d\geq2$ be an integer. We denote by $\mathcal{C}^{(d)}$ the \emph{$d$-fold symmetric product} of $\mathcal{C}$, that is the quotient of the ordinary $d$-fold product $\mathcal{C}\times\cdots\times\mathcal{C}$ by the natural action of the symmetric group $\mathfrak{S}_d$.
We denote an element of $\mathcal{C}^{(d)}$ by $P_1+\dots+P_d$, where $P_1,\dots,P_d$ are points of $\mathcal{C}$.
It is well known that $\mathcal{C}^{(d)}$ is a smooth projective variety of dimension $d$ (see e.g. \cite[p.~18]{ACGH85}).
Therefore the class group of $\mathcal{C}^{(d)}$ can be identified with the \emph{Picard group} $\Pic(\mathcal{C}^{(d)})$ of isomorphism classes of line bundles on $\mathcal{C}^{(d)}$. Moreover, we denote by $\Pic^0(\mathcal{C}^{(d)})$ the connected component of the identity in $\Pic(\mathcal{C}^{(d)})$ and by $\NS(\mathcal{C}^{(d)})=\Pic(\mathcal{C}^{(d)})/\Pic^0(\mathcal{C}^{(d)})$ the \emph{N\'eron-Severi group} of $\mathcal{C}^{(d)}$, that is the quotient group of numerical equivalence classes of divisors on $\mathcal{C}^{(d)}$.

\par We want to describe the Picard group of $\mathcal{C}^{(d)}$. We
begin by recalling a classic result on the N\'eron-Severi group of $\mathcal{C}^{(d)}$ (see e.g. \cite[Proposition 5.1, p. 358]{ACGH85}). 

\begin{Prop} \label{prop-nerongeneralmoduli}
	If $\mathcal{C}$ has very general moduli, then the N\'eron-Severi group of $\mathcal{C}^{(d)}$ is a free Abelian group of rank $2$ generated by the numerical classes of the following divisors:
	\begin{itemize}
		\item the divisor given by the image of the inclusion map $i_{d-1}:\mathcal{C}^{(d-1)}\rightarrow\mathcal{C}^{(d)}$, $i_{d-1}(D)=D+P$, where $P\in\mathcal{C}$ is a fixed point;
		\item the pullback of the theta divisor of the Jacobian variety $J\mathcal{C}$ of $\mathcal{C}$ along the Abel-Jacobi map $u:\mathcal{C}^{(d)}\rightarrow J\mathcal{C}$.
	\end{itemize}
\end{Prop}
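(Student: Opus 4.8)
The plan is to reduce the computation of $\NS(\mathcal{C}^{(d)})$ to a Hodge-theoretic statement and then to exploit the genericity of the Jacobian. Since $\mathcal{C}^{(d)}$ is smooth and projective, the Lefschetz $(1,1)$-theorem identifies $\NS(\mathcal{C}^{(d)})\otimes\mathbb{Q}$ with the space of rational Hodge classes $H^{1,1}\cap H^2(\mathcal{C}^{(d)},\mathbb{Q})$; moreover $\NS$ is automatically torsion-free, being a group of numerical equivalence classes, so it suffices to compute the rank of the Hodge classes and then to exhibit two generators. First I would therefore determine the Hodge structure on $H^2(\mathcal{C}^{(d)},\mathbb{Q})$.

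For this I would invoke Macdonald's description of the cohomology of symmetric products. For $d\geq 2$ it yields a decomposition of rational Hodge structures
\[
H^2(\mathcal{C}^{(d)},\mathbb{Q})\;\cong\;\mathbb{Q}(-1)\;\oplus\;\textstyle\bigwedge^2 H^1(\mathcal{C},\mathbb{Q}),
\]
where the Tate summand $\mathbb{Q}(-1)$ is spanned by the algebraic class $x$ of the divisor $i_{d-1}(\mathcal{C}^{(d-1)})$, and the second summand is the image of the Abel--Jacobi pullback $u^*\colon H^2(J\mathcal{C},\mathbb{Q})=\bigwedge^2 H^1(\mathcal{C},\mathbb{Q})\hookrightarrow H^2(\mathcal{C}^{(d)},\mathbb{Q})$. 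The class $x$ is of type $(1,1)$, hence always contributes one Hodge class. The Hodge classes lying in the second summand are exactly the image of $\NS(J\mathcal{C})\otimes\mathbb{Q}$, and among them the pullback $u^*\theta$ of the theta divisor (the principal polarization) is always present.

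It remains to show that, for very general $\mathcal{C}$, there are no further Hodge classes, i.e. $\NS(J\mathcal{C})\otimes\mathbb{Q}=\mathbb{Q}\theta$. This is where the hypothesis enters: as $\mathcal{C}$ varies in $\mathcal{M}_g$, the local system $\bigwedge^2 H^1(\mathcal{C})$ has monodromy group the image of $\mathrm{Sp}_{2g}$ acting on $\bigwedge^2$ of its standard representation, whose only invariants form the line spanned by the symplectic form, that is, $\theta$. At a very general point the space of Hodge classes coincides with the space of global monodromy invariants---the complementary Noether--Lefschetz loci, where extra Hodge classes appear, form a countable union of proper subvarieties of $\mathcal{M}_g$---so $\NS(J\mathcal{C})\otimes\mathbb{Q}=\mathbb{Q}\theta$. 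Combining the three contributions gives $\rho(\mathcal{C}^{(d)})=2$, with $x$ and $u^*\theta$ spanning the Hodge classes. Finally I would upgrade this to the integral statement---that $x$ and $u^*\theta$ generate $\NS(\mathcal{C}^{(d)})$ and not merely a finite-index subgroup---by checking that they form part of an integral basis of the torsion-free lattice $H^2(\mathcal{C}^{(d)},\mathbb{Z})$, for instance by computing intersection numbers against suitable complementary cycles. The main obstacle is the genericity step: proving that the very general Jacobian has Picard number $1$, which rests on the Zariski-density of the symplectic monodromy of the universal curve and hence on the fact that the Noether--Lefschetz locus is a proper countable union in moduli.
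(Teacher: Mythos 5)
The paper offers no proof of this proposition: it is quoted directly from \cite[Proposition~5.1, p.~358]{ACGH85}, so there is no internal argument to compare yours against. Judged on its own terms, your sketch is correct and follows the standard route, which is in substance also the one in ACGH. The decomposition $H^2(\mathcal{C}^{(d)},\mathbb{Z})\cong \mathbb{Z}x\oplus\bigwedge^2 H^1(\mathcal{C},\mathbb{Z})$ for $d\geq 2$ is Macdonald's, the Lefschetz $(1,1)$-theorem reduces everything to computing $\NS(J\mathcal{C})$, and the genericity input is exactly that the very general Jacobian has Picard number one. Where ACGH deduce this from $\mathrm{End}(J\mathcal{C})=\mathbb{Z}$ for a curve with general moduli, you argue via the Zariski density of the symplectic monodromy of the universal curve together with the countability of the Noether--Lefschetz loci; these are two faces of the same fact, and your phrasing has the merit of making explicit where the ``countably many proper subvarieties'' in the definition of very general moduli are actually consumed. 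The one step you leave genuinely soft is the integral upgrade: rather than ``intersection numbers against complementary cycles'', the clean argument is that Macdonald's decomposition already holds over $\mathbb{Z}$ (the integral cohomology of $\mathcal{C}^{(d)}$ is torsion-free), that the symplectic form $\theta$ is a primitive vector of the lattice $\bigwedge^2 H^1(\mathcal{C},\mathbb{Z})$, and hence that once the rank is known to be one we get $\NS(J\mathcal{C})=H^2(J\mathcal{C},\mathbb{Z})\cap\mathbb{Q}\theta=\mathbb{Z}\theta$, so that $x$ and $u^*\theta$ generate $\NS(\mathcal{C}^{(d)})$ on the nose and not merely up to finite index. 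With that patch your argument is complete.
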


\begin{Rem}
	In Proposition~\ref{prop-nerongeneralmoduli} and in the rest of this note the statement that the curve $\mathcal{C}$ has \emph{very general moduli} is used as a shorthand for ``there are denumerably many proper subvarieties of the moduli space of genus $g$ curves such that the point corresponding to the curve $\mathcal{C}$ does not belong to anyone of them".
\end{Rem}

Now, we focus on the subgroup $\Pic^0(\mathcal{C}^{d})$.
The following is probably well known to experts, but since we couldn't find a suitable reference in the literature we sketch a proof here.

\begin{Prop}\label{prop-Pic0symmetricproduct}
Let $\mathcal{C}$ be a smooth projective irreducible curve of genus $g$ and let $d\geq2$ be an integer, then  we have an isomorphism of Abelian varieties
\[
\Pic^0(\mathcal{C}^{(d)})\cong \Pic^0(\mathcal{C}) \cong (\mathbb{C}^g/\mathbb{Z}^{2g}).
\]	
\end{Prop}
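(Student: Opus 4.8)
The plan is to exhibit the desired isomorphism as the pullback along the Abel--Jacobi map $u\colon\mathcal{C}^{(d)}\to J\mathcal{C}$, and to verify it is an isomorphism by a dimension count combined with a reduction to the classical case $d=1$. Recall that for any smooth projective variety $X$ the group $\Pic^0(X)$ is an Abelian variety (the Picard variety), whose tangent space at the origin is $H^1(X,\mathcal{O}_X)$ and whose underlying complex torus is $H^1(X,\mathcal{O}_X)/H^1(X,\mathbb{Z})$. Thus it suffices to produce a morphism of Abelian varieties relating $\Pic^0(\mathcal{C}^{(d)})$ and $\Pic^0(\mathcal{C})$ and to check that it is an isomorphism on these Hodge-theoretic data.

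First I would compute the relevant cohomology of $\mathcal{C}^{(d)}$. Since $\mathcal{C}^{(d)}=\mathcal{C}^{d}/\mathfrak{S}_d$ is the quotient of a smooth variety by a finite group, rational cohomology is computed by invariants, $H^{k}(\mathcal{C}^{(d)},\mathbb{Q})\cong H^{k}(\mathcal{C}^{d},\mathbb{Q})^{\mathfrak{S}_d}$, and the same holds for the Hodge pieces $H^{q}(\mathcal{C}^{(d)},\Omega^{p})$. By the K\"unneth formula $H^{1}(\mathcal{C}^{d},\mathbb{Q})\cong\bigoplus_{i=1}^{d}H^{1}(\mathcal{C},\mathbb{Q})$, and $\mathfrak{S}_d$ permutes the summands, so the invariant part is the diagonal copy of $H^{1}(\mathcal{C},\mathbb{Q})$; the same applies to $H^{1}(\mathcal{C}^{(d)},\mathcal{O})$. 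Hence the first Betti number of $\mathcal{C}^{(d)}$ equals $2g$ and its Hodge number $h^{0,1}$ equals $g$, so that $\Pic^0(\mathcal{C}^{(d)})$ is an Abelian variety of dimension $g$, exactly as for $\Pic^0(\mathcal{C})=J\mathcal{C}$.

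Next I would consider the pullback homomorphism $u^{*}\colon\Pic^0(J\mathcal{C})\to\Pic^0(\mathcal{C}^{(d)})$, which is a morphism of Abelian varieties because it is induced by the algebraic map $u$. To see it is an isomorphism I would precompose $u$ with the inclusion $j\colon\mathcal{C}\hookrightarrow\mathcal{C}^{(d)}$, $j(P)=P+E$ for a fixed effective divisor $E$ of degree $d-1$; then $u\circ j\colon\mathcal{C}\to J\mathcal{C}$ is a translate of the classical Abel--Jacobi embedding, which by the very construction of the Jacobian induces an isomorphism on $H^{1}$ (translations acting trivially on $H^{1}$ of an Abelian variety). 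Therefore $j^{*}\circ u^{*}=(u\circ j)^{*}$ is an isomorphism on first cohomology, so $u^{*}$ is injective there; since source and target both have dimension $g$ by the previous step, $u^{*}$ is an isomorphism of Abelian varieties.

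Finally, the principal polarization given by the theta divisor identifies $J\mathcal{C}$ with its dual $(J\mathcal{C})^{\vee}=\Pic^0(J\mathcal{C})$, and $J\mathcal{C}=\Pic^0(\mathcal{C})\cong\mathbb{C}^{g}/\mathbb{Z}^{2g}$, which chains together to the asserted isomorphism. I expect the main obstacle to be precisely the step showing that $u^{*}$ is an isomorphism rather than merely an isogeny: this is where one must combine the symmetric-product cohomology computation (to control the dimensions) with the self-duality of the Jacobian, and where care is needed to ensure that the identifications are compatible as morphisms of Abelian varieties, and not merely as abstract group isomorphisms or isomorphisms of complex tori.
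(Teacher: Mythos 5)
Your argument is correct, but it takes a genuinely different route from the paper's. The paper proceeds by universal properties: it shows that $J\mathcal{C}$ satisfies the universal property of the Albanese variety of $\mathcal{C}^{(d)}$ (factoring any morphism $\mathcal{C}^{(d)}\to A$ through its restriction to an embedded copy of $\mathcal{C}$), and then invokes Albanese--Picard duality together with the self-duality of the Jacobian to get $\Pic^0(\mathcal{C}^{(d)})\cong \mathrm{Alb}(\mathcal{C}^{(d)})^{\vee}\cong J\mathcal{C}^{\vee}\cong J\mathcal{C}$; the key verification that $f$ itself factors through $h$ is left to the reader. You instead compute $b_1(\mathcal{C}^{(d)})=2g$ and $h^{0,1}(\mathcal{C}^{(d)})=g$ directly from the $\mathfrak{S}_d$-invariants of the K\"unneth decomposition and exhibit the isomorphism concretely as $u^*$, verified on $H^1$ by restricting to the curve $j(\mathcal{C})$. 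What you gain is a self-contained, checkable argument with an explicit inverse (essentially $j^*$); what the paper gains is brevity and independence from the cohomology of symmetric products. Two points in your sketch deserve emphasis. First, your choice $j(P)=P+E$ is important for this style of argument: composing $u$ with the diagonal $P\mapsto dP$ gives $d$ times the Abel--Jacobi map, which is injective but not surjective on $H^1(\cdot,\mathbb{Z})$, and would only yield an isogeny. Second, to rule out the isogeny scenario you rightly flag, run the argument on integral cohomology: since $(u\circ j)^*$ is an isomorphism on $H^1(\cdot,\mathbb{Z})$, the map $u^*$ is a \emph{split} injection there, so its image is a full-rank direct summand of the torsion-free group $H^1(\mathcal{C}^{(d)},\mathbb{Z})$ and hence everything; injectivity on $H^1(\cdot,\mathcal{O})$ plus a dimension count alone would not suffice.
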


\begin{proof}
First, we prove that the Jacobian $J\mathcal{C}=\Pic^0(\mathcal{C})$ satisfies the universal property of the Albanese variety for $\mathcal{C}^{(d)}$. Namely, let $f:\mathcal{C}^{(d)}\rightarrow A$ be a morphism to an Abelian variety $A$. We show that $f$ factors through $J\mathcal{C}$.
We compose $f$ with the diagonal embedding $\mathcal{C}\hookrightarrow\mathcal{C}^{(d)}$, obtaining a morphism $f_0:\mathcal{C}\rightarrow A$.
Now fix a base point $P_0\in\mathcal{C}$, and consider the maps $u:\mathcal{C}\rightarrow J\mathcal{C}$ and $\sigma_d:\mathcal{C}^{(d)}\rightarrow J\mathcal{C}$ given by $u(P)=[\mathcal{O}(P-P_0)]$ and $\sigma_d(P_1+\cdots+P_d)=[\mathcal{O}(P_1+\cdots+P_d-dP_0)]$ respectively.
We obtain a commutative diagram
 	\begin{equation*}
 	\begin{tikzcd}
 	& \mathcal{C} \arrow[r, hook] \arrow[rd, "u"] \arrow[rr, bend left=40, "f_0"]
 	&\mathcal{C}^{(d)} \arrow{r}{f}\arrow[d, "\sigma_d"]
 	& A  
 	& \\
 	& 
 	&J\mathcal{C} \arrow[ru, dotted, "h"]
 	& 
 	&  
 	\end{tikzcd}
 	\end{equation*}
 where the morphism $h:J\mathcal{C}\rightarrow A$ exists by the universal property of $J\mathcal{C}\cong\mathrm{Alb}(\mathcal{C})$ and is such that $f_0=h\circ u$ (see e.g. \cite[Section~17.5]{Pol03}).
 One can show that $f$ factors through $h$ as well.
 Therefore we have $\mathrm{Alb}(\mathcal{C}^{(d)})\cong J\mathcal{C}$. 
 Now, recalling that the Albanese and Picard variety are dual complex tori \cite[Chapter~2, Section~6]{GH78} we obtain
 the following chain of isomorphisms
 \[
 \Pic^0(\mathcal{C}^{(d)})\cong\mathrm{Alb}(\mathcal{C}^{(d)})^{\vee}\cong J\mathcal{C}^{\vee}\cong J\mathcal{C}=\Pic^0(\mathcal{C}).
 \]
 Finally, an application of the exponential sequence \cite[Appendix B, Section 5]{Har77} yields the isomorphism $\Pic^0(\mathcal{C}) \cong (\mathbb{C}^g/\mathbb{Z}^{2g})$.	
\end{proof}

\begin{Cor}\label{cor-picardgeneralmoduli}
Let $\mathcal{C}$ be a smooth projective irreducible curve of genus $g$ which has very general moduli and let $d\geq2$ be an integer, then we have
\[
\Pic(\mathcal{C}^{(d)})\cong\Pic^0(\mathcal{C}^{(d)})\oplus \NS(\mathcal{C}^{(d)}) \cong (\mathbb{C}^g/\mathbb{Z}^{2g}) \oplus \mathbb{Z}^2.
\]
\end{Cor}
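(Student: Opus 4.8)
The plan is to exploit the defining short exact sequence of the N\'eron--Severi group together with the two preceding propositions. By the very definition of $\NS(\mathcal{C}^{(d)})$ as the quotient $\Pic(\mathcal{C}^{(d)})/\Pic^0(\mathcal{C}^{(d)})$, there is a short exact sequence of abelian groups
\[
0 \longrightarrow \Pic^0(\mathcal{C}^{(d)}) \longrightarrow \Pic(\mathcal{C}^{(d)}) \longrightarrow \NS(\mathcal{C}^{(d)}) \longrightarrow 0.
\]
The entire content of the statement is that this sequence splits and that its two outer terms have the claimed shape.

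First I would invoke Proposition~\ref{prop-nerongeneralmoduli} to identify $\NS(\mathcal{C}^{(d)}) \cong \mathbb{Z}^2$, which is in particular a free abelian group. Since a free abelian group is a projective $\mathbb{Z}$-module, every short exact sequence of abelian groups having it as its rightmost term splits; concretely, one lifts the two generators of $\NS(\mathcal{C}^{(d)})$ (the classes of the divisors $i_{d-1}(\mathcal{C}^{(d-1)})$ and $u^{*}\Theta$ described in Proposition~\ref{prop-nerongeneralmoduli}) to elements of $\Pic(\mathcal{C}^{(d)})$, and any such choice determines a splitting. Hence
\[
\Pic(\mathcal{C}^{(d)}) \cong \Pic^0(\mathcal{C}^{(d)}) \oplus \NS(\mathcal{C}^{(d)})
\]
as abelian groups.

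Finally I would substitute the two identifications already obtained: Proposition~\ref{prop-Pic0symmetricproduct} gives $\Pic^0(\mathcal{C}^{(d)}) \cong \mathbb{C}^g/\mathbb{Z}^{2g}$, and Proposition~\ref{prop-nerongeneralmoduli} gives $\NS(\mathcal{C}^{(d)}) \cong \mathbb{Z}^2$, which together yield the desired isomorphism $\Pic(\mathcal{C}^{(d)}) \cong (\mathbb{C}^g/\mathbb{Z}^{2g}) \oplus \mathbb{Z}^2$.

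I do not expect a serious obstacle here: the argument is essentially formal once the two propositions are available, the only nontrivial inputs being those propositions themselves. The single point deserving a word of care is that the splitting is asserted only at the level of abelian groups, and need not respect any algebraic-group or scheme-theoretic structure; this is exactly what the statement requires, and the freeness of $\NS(\mathcal{C}^{(d)})$ guarantees it unconditionally.
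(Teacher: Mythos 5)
Your proposal is correct and follows essentially the same route as the paper: both use the short exact sequence $0\to\Pic^0(\mathcal{C}^{(d)})\to\Pic(\mathcal{C}^{(d)})\to\NS(\mathcal{C}^{(d)})\to0$, deduce splitting from the freeness of $\NS(\mathcal{C}^{(d)})\cong\mathbb{Z}^2$ via Proposition~\ref{prop-nerongeneralmoduli}, and conclude with Proposition~\ref{prop-Pic0symmetricproduct}. Your added remark that the splitting is only as abelian groups is a fair point of care but does not change the argument.
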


\begin{proof}
	Consider the short exact sequence of groups
\[
0\rightarrow\Pic^0(\mathcal{C}^{(d)})\rightarrow \Pic(\mathcal{C}^{(d)}) \rightarrow \NS(\mathcal{C}^{(d)}) \rightarrow 0,
\]
where the last map is the first Chern class.
By Proposition~\ref{prop-nerongeneralmoduli}, we have $\NS(\mathcal{C}^{(d)})\cong\mathbb{Z}^2$, therefore the previous sequence splits. Since $\Pic^0(\mathcal{C}^{(d)})\cong \mathbb{C}^g/\mathbb{Z}^{2g}$ by Proposition~\ref{prop-Pic0symmetricproduct}, we are done.
\end{proof}

\begin{Rem}\label{remark-amplenotinPic0}
Let $V$ be a smooth variety of dimension $\geq2$ and let $D$ be an ample divisor on $V$, then  $[D]\not\in\Pic^0(V)$. 
To see this, observe first that if $D_1$ and $D_2$ are two linearly equivalent divisors on $V$ then $D_1\cdot\mathcal{C}\simeq D_2\cdot\mathcal{C}$ for any curve $\mathcal{C}\subseteq V$.
In particular, if $D_1$ is linearly equivalent to $0$, i.e., $[D_1]\in\Pic^0(V)$, then $D_1\cdot\mathcal{C}=0$.
On the other hand, let $n$ be a positive integer such that  $nD$ is very ample and let $V\hookrightarrow\mathbb{P}^r$ be the embedding  through the linear system $|nD|$.
Since the multiples of $nD$ give hyperplanes in $\mathbb{P}^r$, and a generic curve and a hyperplane always meet in projective space, we have $(nD)\cdot\mathcal{C}>0$ generically, but $(nD)\cdot \mathcal{C}=n(D\cdot\mathcal{C})$, so $D\cdot\mathcal{C}>0$, hence $[D]$ cannot be in $\Pic^0(V)$.	
\end{Rem}

We are now ready to provide the claimed construction of a complete normal isolated singularity of dimension $\geq3$ with large class group.
The two-dimensional case was proved by R.~Wiegand in \cite{Wie01}.

\begin{Theorem}\label{theorem-constructionlocalringclassgroup}
	Let $g\geq 1$ and $d\geq 2$ be integers. Then there exists a complete local normal domain $B$ over $\mathbb{C}$ with an isolated singularity such that $\dim B=d+1$ and $\Cl(B)$ contains a copy of $(\mathbb{R}/\mathbb{Z})^{\oplus 2g}$ as subgroup. 	
\end{Theorem}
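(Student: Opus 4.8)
The plan is to realize the desired singularity as the vertex of an affine cone over the symmetric product $\mathcal{C}^{(d)}$ and to read off its class group from the Picard group computed above. Fix a smooth projective irreducible curve $\mathcal{C}$ of genus $g$ and set $X=\mathcal{C}^{(d)}$, a smooth projective variety of dimension $d$. Choose a very ample line bundle $L$ on $X$ and, replacing $L$ by a sufficiently large tensor power if necessary, arrange that the associated embedding $X\hookrightarrow\mathbb{P}^N$ is projectively normal; this costs nothing for the argument, since $L^{\otimes m}$ is again ample. Let $A=\bigoplus_{n\geq0}H^0(X,L^{\otimes n})$ be the section ring. Then $A$ is a finitely generated normal graded $\mathbb{C}$-domain with $A_0=\mathbb{C}$ and $\dim A=d+1$, and $\Spec A$ is the affine cone over $X$. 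Because $X$ is smooth and the embedding is projectively normal, $\Spec A$ is regular away from its vertex $v$ (the irrelevant maximal ideal $\mathfrak{m}$), so it has an isolated singularity at $v$. I set $B=\widehat{A_{\mathfrak{m}}}$, the completion of the local ring at the vertex: since $A$ is excellent and normal, $B$ is a complete local normal domain with $\dim B=d+1\geq3$ and an isolated singularity, as required.

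Next I would compute $\Cl(A)$. Removing the vertex, the punctured cone $U=\Spec A\setminus\{v\}$ is the complement of the zero section in the total space of a line bundle over $X$, hence a $\mathbb{G}_m$-torsor over $X$; the localization sequence of Picard groups reads
\[
\mathbb{Z}\xrightarrow{\,1\mapsto[L]\,}\Pic(X)\longrightarrow\Pic(U)\longrightarrow0,
\]
so that $\Pic(U)\cong\Pic(X)/\mathbb{Z}[L]$. As $v$ is a point of the $(d+1)$-dimensional normal variety $\Spec A$ with $d+1\geq2$ and $U$ is smooth, restriction yields $\Cl(A)\cong\Cl(U)=\Pic(U)$; and since $\mathfrak{m}$ is the unique graded maximal ideal of $A$, localization gives $\Cl(A)\cong\Cl(A_{\mathfrak{m}})$. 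Combining these, $\Cl(A_{\mathfrak{m}})\cong\Pic(X)/\mathbb{Z}[L]$.

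Now I would locate the torus subgroup. For every integer $n\neq0$ one of $nL,-nL$ is ample, so $n[L]\notin\Pic^0(X)$ by Remark~\ref{remark-amplenotinPic0}; hence $\mathbb{Z}[L]\cap\Pic^0(X)=\{0\}$, and the composite $\Pic^0(X)\hookrightarrow\Pic(X)\twoheadrightarrow\Pic(X)/\mathbb{Z}[L]$ is injective. By Proposition~\ref{prop-Pic0symmetricproduct} we have $\Pic^0(X)\cong\mathbb{C}^g/\mathbb{Z}^{2g}$, and since $\mathbb{Z}^{2g}$ is a full-rank lattice in $\mathbb{C}^g\cong\mathbb{R}^{2g}$ it follows that $\mathbb{C}^g/\mathbb{Z}^{2g}\cong(\mathbb{R}/\mathbb{Z})^{\oplus2g}$ as abstract groups. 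Therefore $(\mathbb{R}/\mathbb{Z})^{\oplus2g}$ embeds into $\Cl(A_{\mathfrak{m}})$. Note that this step uses only Proposition~\ref{prop-Pic0symmetricproduct} and Remark~\ref{remark-amplenotinPic0}, so no genericity of the moduli of $\mathcal{C}$ is required.

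Finally I would pass to $B$: since $A_{\mathfrak{m}}$ is an excellent normal local domain, its completion is a normal local domain and the canonical map $\Cl(A_{\mathfrak{m}})\to\Cl(B)$ is injective, whence $(\mathbb{R}/\mathbb{Z})^{\oplus2g}\hookrightarrow\Cl(B)$, proving the theorem. The main obstacle is controlling the class group through the two passages: first the identification $\Cl(A)\cong\Pic(X)/\mathbb{Z}[L]$, which rests on projective normality and the cone geometry, and second the injectivity of $\Cl(A_{\mathfrak{m}})\to\Cl(\widehat{A_{\mathfrak{m}}})$ under completion, where the isolated-singularity and excellence hypotheses enter and one invokes the theory of divisor class groups of cones and their completions (in the style of Danilov and Flenner). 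The remaining verifications are formal.
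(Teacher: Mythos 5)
Your proof is correct, and it follows the same overall architecture as the paper (projectively normal cone over $\mathcal{C}^{(d)}$, class group of the cone via the hyperplane/ample class, then localization and completion at the vertex). The one genuine difference is how you see that $\Pic^0(X)$ survives in the quotient $\Pic(X)/\mathbb{Z}[L]$. The paper assumes $\mathcal{C}$ has very general moduli so that $\NS(\mathcal{C}^{(d)})\cong\mathbb{Z}^2$ (Proposition~\ref{prop-nerongeneralmoduli}), splits the sequence $0\to\Pic^0\to\Pic\to\NS\to0$, and then uses the Nakai--Moishezon criterion to replace the very ample divisor $D$ by a numerically equivalent one lying in the $\NS$ summand, concluding that $\Pic^0$ is a \emph{direct summand} of $\Cl(A)$. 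You instead observe that every nonzero multiple of $[L]$ is (up to sign) ample, hence lies outside $\Pic^0(X)$ by Remark~\ref{remark-amplenotinPic0}, so $\mathbb{Z}[L]\cap\Pic^0(X)=\{0\}$ and $\Pic^0(X)$ injects into $\Pic(X)/\mathbb{Z}[L]\cong\Cl(A)$. This is more elementary and more general: it needs neither the structure of the N\'eron--Severi group nor any genericity of the moduli of $\mathcal{C}$, and it avoids the splitting and the numerical-equivalence adjustment entirely. The trade-off is that you obtain $(\mathbb{R}/\mathbb{Z})^{\oplus2g}$ only as a subgroup rather than as a direct summand of $\Cl(B)$ --- but that is all the theorem asserts. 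The remaining steps (the identification $\Cl(A)\cong\Cl(A_{\mathfrak{m}})$ for the irrelevant maximal ideal, and the injectivity of $\Cl(A_{\mathfrak{m}})\to\Cl(\widehat{A_{\mathfrak{m}}})$ together with preservation of normality and of the isolated singularity under completion, which is the content of Lemma~\ref{lemma-completionofisolated} via excellence) coincide with the paper's.
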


\begin{proof}
Let $V=\mathcal{C}^{(d)}$ be the symmetric product of a smooth projective irreducible curve of genus $g$ and very general moduli. By Corollary~\ref{cor-picardgeneralmoduli}, we have $\Pic(V)\cong\Pic^0(V)\oplus\NS(V)\cong(\mathbb{C}^g/\mathbb{Z}^{2g})\oplus\mathbb{Z}^2$.
Moreover, since $V$ is a smooth variety we can identify the Picard group of $V$ with its class group, i.e., $\Cl(V)\cong\Pic(V)$.

\par We embed $V$ into a projective space $\mathbb{P}^r=\mathbb{P}^r_{\mathbb{C}}$.
Since $V$ is normal, by eventually composing with a $s$-uple Veronese embedding for $s\gg0$, we may assume that the embedding $V\subseteq\mathbb{P}^r$ is projectively normal, that is the homogeneous coordinate ring $A$ of $V$ is a normal domain of dimension $d+1$ (see \cite[Chapter II, Exercise~5.14]{Har77}).
Moreover since $V$ is smooth, $A$ has only an isolated singularity at the origin. 
	
\par Let $D$ be a very ample divisor on $V$. By Remark \ref{remark-amplenotinPic0}, we know that $[D]\not\in\Pic^0(V)$,
so we have $[D]=(D_1,D_2)\in\Pic^0(V)\oplus\NS(V)$, with $D_2\neq0$.
By the Nakai-Moishezon criterion \cite[Theorem~2.3.18]{Laz04} the ampleness of a divisor is invariant by numerical equivalence, therefore we may assume without loss of generality that $D_1=0$.

\par Let $H$ be the hyperplane of $\mathbb{P}^r$ such that $H\cap V=D$.
By \cite[Chapter II, Exercise~6.3]{Har77} we have a short exact sequence relating the class group of $V$ and the class group of $A$ (which is also the class group of the affine cone $C(V)\subseteq\mathbb{A}^{r+1}$):
\[
0\rightarrow \mathbb{Z} \xrightarrow{\varphi} \Cl(V) \rightarrow \Cl(A) \rightarrow 0.
\]
Here, the first map is given by $\varphi(1)=[H\cap V]=[D]$.
Therefore, we have $\Cl(A)\cong\Cl(V)/\mathrm{Im}\varphi$.
Using the decomposition $\Cl(V)\cong \Pic^0(V)\oplus\NS(V)$, 
we may write $\varphi=(\varphi_1,\varphi_2)$, and we see that $\varphi_1=0$, since $[D]=(0,D_2)$ by assumption.
It follows that $\Cl(A)\cong\Pic^0(V)\oplus\NS(V)/\mathrm{Im}\varphi_2$.
In particular, $\Cl(A)$ contains a copy of $\Pic^0(V)\cong\mathbb{C}^g/\mathbb{Z}^{2g}$ as direct summand.
\par Now, let $\mathfrak{m}$ be the irrelevant maximal ideal of $A$, and consider the localization $A_{\mathfrak{m}}$ and its $\mathfrak{m}A_{\mathfrak{m}}$-adic completion $B=\widehat{A_{\mathfrak{m}}}$.
$B$ is a complete local normal domain of Krull dimension $d+1$ and by the upcoming Lemma~\ref{lemma-completionofisolated}, $B$ is an isolated singularity.
From the localization sequence of the class group we get an isomorphism $\Cl(A)\cong\Cl(A_{\mathfrak{m}})$. Moreover the canonical map $A_{\mathfrak{m}}\rightarrow B$ is faithfully flat, so we obtain an injective group homomorphism $\Cl(A_{\mathfrak{m}})\hookrightarrow \Cl(B)$ which shows that $\Cl(B)$ contains a copy of $\mathbb{C}^g/\mathbb{Z}^{2g}$ as subgroup. Since $\mathbb{C}^g/\mathbb{Z}^{2g}\cong(\mathbb{R}/\mathbb{Z})^{\oplus2g}$ as groups, we are done.
\end{proof}
 
The following lemma is probably well known. I thank T.~Murayama for suggesting it  to me.
 
\begin{Lemma}\label{lemma-completionofisolated}
Let $A$ be a Noetherian algebra of finite type over a field which has only an isolated singularity at a maximal ideal $\mathfrak{m}$. Then the $\mathfrak{m}A_{\mathfrak{m}}$-adic completion of its localization $A_{\mathfrak{m}}$ is an isolated singularity.	
\end{Lemma}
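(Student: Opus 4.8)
The plan is to use the fact that a ring essentially of finite type over a field is excellent, so that the completion map $A_{\mathfrak{m}}\to\widehat{A_{\mathfrak{m}}}$ has geometrically regular formal fibers, and then to transport regularity from the punctured spectrum of $A_{\mathfrak{m}}$ to that of its completion by flat ascent of regularity.

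Write $R=A_{\mathfrak{m}}$ with maximal ideal $\mathfrak{n}=\mathfrak{m}A_{\mathfrak{m}}$, and $B=\widehat{R}$ with maximal ideal $\mathfrak{m}_B=\mathfrak{n}B$. The hypothesis that $A$ has an isolated singularity at $\mathfrak{m}$ translates, after localizing, into the statement that $R_{\mathfrak{p}}$ is regular for every prime $\mathfrak{p}\neq\mathfrak{n}$ of $R$ (such $\mathfrak{p}$ correspond to primes $\mathfrak{P}\subsetneq\mathfrak{m}$ of $A$, and $R_{\mathfrak{p}}=A_{\mathfrak{P}}$); the goal is to prove the analogous statement that $B_{\mathfrak{q}}$ is regular for every prime $\mathfrak{q}\neq\mathfrak{m}_B$ of $B$.

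First I would recall that $R$, being a localization of a finite-type algebra over a field, is excellent; in particular it is a G-ring, which is precisely the assertion that the completion morphism $R\to B$ is regular, i.e.\ flat with geometrically regular fibers (see, e.g., EGA IV, 7.8.3, or Matsumura's \emph{Commutative Ring Theory}, Section~32). This is the one substantial input and, in my view, the crux of the lemma: it is exactly the excellence of finite-type $k$-algebras that prevents completion from creating new singularities on the punctured spectrum, which for a general Noetherian local ring it could well do. Granting this, fix a prime $\mathfrak{q}\neq\mathfrak{m}_B$ and let $\mathfrak{p}=\mathfrak{q}\cap R$ be its contraction along the faithfully flat map $R\to B$. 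Then $\mathfrak{p}\neq\mathfrak{n}$: otherwise $\mathfrak{q}\supseteq\mathfrak{n}B=\mathfrak{m}_B$ would force $\mathfrak{q}=\mathfrak{m}_B$, contrary to the choice of $\mathfrak{q}$. Hence $R_{\mathfrak{p}}$ is regular by the hypothesis.

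Finally I would apply flat ascent of regularity to the flat local homomorphism $R_{\mathfrak{p}}\to B_{\mathfrak{q}}$ obtained by localizing $R\to B$. Its closed fiber $B_{\mathfrak{q}}/\mathfrak{p}B_{\mathfrak{q}}$ is a localization of the formal fiber $B\otimes_R\kappa(\mathfrak{p})$, which is geometrically regular by the G-ring property, hence regular. Since the base $R_{\mathfrak{p}}$ is regular and the closed fiber is regular, the standard criterion for flat local maps (if $\phi\colon(S,\mathfrak{a})\to(T,\mathfrak{b})$ is flat local with $S$ and $T/\mathfrak{a}T$ regular, then $T$ is regular) yields that $B_{\mathfrak{q}}$ is regular. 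As $\mathfrak{q}\neq\mathfrak{m}_B$ was arbitrary, the singular locus of $B$ lies in $\{\mathfrak{m}_B\}$; and since regularity descends along the faithfully flat map $R\to B$, the fact that $A_{\mathfrak{m}}$ is singular forces $B$ to be singular as well, so $B$ is indeed an isolated singularity.
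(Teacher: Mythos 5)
Your proof is correct and follows essentially the same route as the paper's: both invoke the G-ring/excellence property of finite-type algebras over a field to conclude that $A_{\mathfrak{m}}\rightarrow\widehat{A_{\mathfrak{m}}}$ is a regular map, contract a nonmaximal prime $\mathfrak{q}$ of the completion to a prime $\mathfrak{p}$ of $A_{\mathfrak{m}}$, and ascend regularity along the localized flat map with regular fibers (Matsumura, Theorem~23.7). The only differences are that you explicitly verify $\mathfrak{p}\neq\mathfrak{m}A_{\mathfrak{m}}$ and note that $B$ remains singular, two small points the paper leaves implicit.
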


\begin{proof}
	Since $A$ is of finite type over a field it is a $G$-ring in the sense of \cite[p.~255]{Mat89}, and its localization $A_{\mathfrak{m}}$ is a $G$-ring as well, so the map $\varphi:A_{\mathfrak{m}}\rightarrow\widehat{A_{\mathfrak{m}}}$ is regular. Moreover, $A'=A_{\mathfrak{m}}$ is still an isolated singularity. In order to prove that $B=\widehat{A_{\mathfrak{m}}}$ is an isolated singularity, pick a prime ideal $\mathfrak{q}\in\Spec B$ different from the unique maximal ideal, and let $\mathfrak{p}=\mathfrak{q}\cap A'$. 
	The localization $A'_{\mathfrak{p}}$ is a regular local ring, and we are left to prove that $B_{\mathfrak{q}}$ is also regular.
	Consider the localized map 
	\[
	\widetilde{\varphi}:A'_{\mathfrak{p}}\rightarrow B_{\mathfrak{q}}
	\]
	which is still regular. So its fibers are regular rings, and \cite[Theorem~23.7]{Mat89} implies that $B_{\mathfrak{q}}$ is regular.	
\end{proof}

\end{document}